\newcommand{\espE}{\mathbb{E} }
\newtheorem{theorem}{Theorem}
\newtheorem{lemma}[theorem]{Lemma}
\newtheorem{definition}[theorem]{Definition}
\newtheorem{assumption}[theorem]{Assumption}
\def\BibTeX{{\rm B\kern-.05em{\sc i\kern-.025em b}\kern-.08em
    T\kern-.1667em\lower.7ex\hbox{E}\kern-.125emX}}
\begin{document}

\title{Mean-Covariance Steering of a Linear Stochastic System with Input Delay and Additive Noise}

\author{\IEEEauthorblockN{Gabriel Velho}
\IEEEauthorblockA{\textit{\small Université Paris-Saclay,} \\
\textit{\small CentraleSupélec, CNRS} \\
\textit{\small Laboratoire des Signaux} \\ 
\textit{\small et Systèmes,} \\
\small Gif-sur-Yvette, France \\
\footnotesize gabriel.velho@centralesupelec.fr}
\and
\IEEEauthorblockN{Riccardo Bonalli}
\IEEEauthorblockA{\textit{\small Université Paris-Saclay,} \\
\textit{\small CentraleSupélec, CNRS} \\
\textit{\small Laboratoire des Signaux} \\ 
\textit{\small et Systèmes,} \\
\small Gif-sur-Yvette, France \\
\footnotesize riccardo.bonalli@centralesupelec.fr}
\and
\IEEEauthorblockN{Jean Auriol}
\IEEEauthorblockA{\textit{\small Université Paris-Saclay,} \\
\textit{\small CentraleSupélec, CNRS} \\
\textit{\small Laboratoire des Signaux} \\ 
\textit{\small et Systèmes,} \\
\small Gif-sur-Yvette, France \\
\footnotesize jean.auriol@centralesupelec.fr}
\and
\IEEEauthorblockN{Islam Boussaada}
\IEEEauthorblockA{\textit{\small Université Paris-Saclay,} \\
\textit{\small CentraleSupélec, CNRS} \\
\textit{\small Laboratoire des Signaux} \\ 
\textit{\small et Systèmes,} \\
\textit{\small Institut Polytechnique} \\
\textit{\small des Sciences Avancées,} \\
\small Gif-sur-Yvette, France \\
\footnotesize islam.boussaada@centralesupelec.fr}
}

\maketitle

\begin{abstract}
In this paper, we introduce a novel approach to solve the (mean-covariance) steering problem for a fairly general class of linear continuous-time stochastic systems subject to input delays. 
Specifically, we aim at steering delayed linear stochastic differential equations to a final desired random variable with given mean and covariance. 
We first establish a controllability result for these control systems, revealing the existence of a lower bound under which the covariance of the control system cannot be steered. 
This structural threshold covariance stems from a unique combined effect due to stochastic diffusions and delays. Next, we propose a numerically cheap approach to reach any neighbor of this threshold covariance in finite time. Via an optimal control-based strategy, we enhance the aforementioned approach to keep the system covariance small at will in the whole control horizon. 
Under some additional assumptions on the dynamics, we give theoretical guarantees on the efficiency of our method. 
Finally, 
numerical simulations are provided to ground our theoretical findings, showcasing the ability of our methods in optimally approaching the covariance threshold.
\end{abstract}

\begin{IEEEkeywords}
Stochastic systems, Delay systems, Linear systems, Covariance steering.
\end{IEEEkeywords}

\section{Introduction}



Dynamical processes are often affected by disturbances stemming from various factors, such as imprecise measurements, parameter uncertainties, and external disturbances. 
Such disturbances may considerably alter the  dynamics, as in several applications such as automated vehicle steering, traffic network control, or building heat regulation. 
Therefore, effectively mitigating these uncertainties is crucial to establish the reliability and security of such controlled systems.
Stochastic Differential Equations (SDEs) provide broad and accurate modelization of a large class of uncertain systems. 
Stochastic control enables the effective design of stabilizing controllers for SDEs, which are also robust against random fluctuations. In particular, such robustness may be reliably achieved by seeking controllers that keep the state variance relatively small, see \cite{ chen_optimal_2016, chen_optimal_2016-1 ,liu_optimal_2023} 
, and references therein. 

Another crucial consideration in the modeling of dynamical systems is the integration of delays into the dynamics. Delays in the system state or the control input stem from various sources, including physical constraints or transmission times~\cite{niculescu_stability_1998}. When delays take large values, neglecting them may lead to critical stability issues, hindering robust control of the system dynamics.
For deterministic systems, the generation of predictive state models has been suggested to handle delays \cite{artstein_linear_1982, krstic_boundary_2008}. However, these methods often depend on prior knowledge of the system dynamics, a challenging limitation when dealing with stochastic systems. Certain methods have been proposed to handle systems with unknown perturbations \cite{lechappe_new_2015, alves_lima_newton-series-based_2022}. However, their implementation still requires some prior knowledge of the noise structure. Consequently, such approaches can hardly be adjusted to stabilize delayed SDEs.

All the aforementioned hindrances show the urgency in developing novel methods to efficiently and robustly control stochastic systems with delays. In this regard, some stabilization methods have been proposed \cite{shaikhet_lyapunov_2013,cacace_predictor-based_2021}. Yet, state covariance minimization, often key to mitigating uncertainty, is generally disregarded. To effectively compute strategies seeking minimal covariance, optimal control methods have been alternatively investigated. These approaches are supported by necessary conditions for optimality, which are however efficiently implementable only in specific settings \cite{oksendal_maximum_2001, chen_maximum_2010, meng_global_2021}. 
Notably, in Linear Quadratic (LQ) settings, i.e., linear dynamics and quadratic costs, 
conditions for optimality may be efficiently solved by 
seeking solutions to Riccati-type ordinary differential equations \cite{liang_solution_2018, jin_tracking_2019, wang_linear_2023}, a numerically cheap method. Nevertheless, to the best of our knowledge, as efficient as they may be, these approaches do not support final state constraints. Importantly, estimates bounding the state covariance are generally underrated and not investigated, although these are crucial to establish the system's overall safety \cite{wang_risk-averse_2022}.


 In this paper, by merging methodologies from deterministic delayed control and stochastic control, we start bridging these gaps. 
Specifically, for the first time, we introduce a novel approach to control linear SDEs with delays under guarantees ensuring the state covariance is kept small throughout the control horizon. Our controls are uniquely cheap to numerically implement. 
Our contribution is threefold: 

\begin{enumerate}
    \item First, we investigate the controllability, in mean and covariance, of delayed SDEs. For this, we extend Arstein-type transformations \cite{manitius_finite_1979, artstein_linear_1982}. Unlike non-delayed settings, our analysis shows the covariance of linear delayed SDEs can not be steered to any symmetric definite positive matrix. Instead, due to the combined presence of diffusions and delays, the system can never be sterred beyond some \textit{minimal covariance}\footnote{with respect to the partial order in the space of symmetric definite matrices.}. This is a structural obstruction exclusive to delayed SDEs. Still, under classical rank-type controllability conditions, we prove an open-loop control exists enabling to steer delayed SDEs close to this minimal covariance at will.
    
    \item The implementation of open-loop controllers is notably impractical in the stochastic framework. To achieve the design of numerically tractable control laws, we leverage covariance steering techniques \cite{chen_optimal_2016, liu_optimal_2023}. This will enable computing feedback controls that steer linear delayed SDEs from an initial covariance to any final covariance as close as wanted to the aforementioned minimal covariance while minimizing the control effort.
    
    \item The previous class of controllers does not guarantee the covariance remains small throughout the whole control horizon. To bridge this gap, we leverage optimal control techniques to minimize both the final state covariance and the covariance along the whole trajectory. Importantly, under some additional assumptions on the dynamics, we provide estimates of these minimal covariances in the autonomous case. These bounds show the covariance of delayed SDEs can be forced to evolve within any neighbor of the minimal covariance. 
    
    
\end{enumerate}


The paper is organized as follows. In Section \ref{PF}, we outline the problem formulation. Our first controllability result is stated and proved in Section \ref{CC}. In Section \ref{CS}, we introduce a numerically efficient methodology to steer the system covariance. Upon these results, in Section \ref{OC} we develop a technique to minimize the covariance throughout the control horizon, developing error estimates. Finally, in Section \ref{NR}, we present numerical simulations on a real-world system model for building temperature control, subject to realistic temperature transmission delays and random external temperature fluctuations.

\section{Problem formulation}\label{PF}

\subsection{Notations} Let us consider $n$ and $m$ two positive integers. We assume state variables take values in $\mathbb{R}^n$, while  control variables take values in $\mathbb{R}^m$. We assume we are given a filtered probability space $(\Omega, \mathcal{F} \triangleq (\mathcal{F}_t)_{t \in [0,\infty)}, \mathbb{P})$. For the sake of clarity in the exposition and without loss of generality, from now on, we assume stochastic perturbations are due to a one-dimensional Wiener process $W_t$, which is adapted to the filtration $\mathcal{F}$. 
Let $T > 0$ be some given time horizon, while $0 < h < T$ is some fixed delay. For any $r \in \mathbb{N}$, we denote by $L_\mathcal{F}^2([0,T] , \mathbb{R}^r)$ the set of square integrable processes $P: [0,T]\times\Omega \to \mathbb{R}^r$ that are adapted to $\mathcal{F}$. The spaces of semi-definite and definite positive symmetric matrices in $\mathbb{R}^n$ are denoted by $\mathcal{S}^+_n$ and $\mathcal{S}^{++}_n$, respectively. If $M(t)$ is an $L^\infty([0,T], \mathbb{R}^{n \times n}) $ matrix function, we denote $\Phi_M(t,s)$ the fundamental matrix associated to it, which is by definition the unique solution to the system 
\begin{equation*}
\left\{
    \begin{array}{ll}
    \displaystyle \frac{d \Phi_M}{dt}(t,s) & = M(t) \Phi_M(t,s), \\
    \Phi_M(s,s) & = I .
    \end{array}
    \right.
\end{equation*}
If $X \in L_\mathcal{F}^2([0,T] , \mathbb{R}^r)$, we denote by $\Sigma_X(\cdot)$ its covariance (matrix), which is defined as
$$
\Sigma_X(t) \triangleq \espE[ (X(t) - \espE[X(t)]) (X(t) - \espE[X(t)])^T ] \in \mathcal{S}^+_n .
$$

\subsection{Control system and assumptions}

In this paper, we consider delayed-input SDEs of the form
\begin{equation}\label{eq:base_system_linear_delayed_0}
\left\{
    \begin{array}{ll}
    dX(t) &= \hspace{1em} \left( A(t) X(t) + B(t) U(t - h) + r(t) \right) dt \\
    &\qquad + \sigma(t) dW_t, \\
    X(0) &= \hspace{1em} X_0 , \\
    U(s) &= \hspace{1em} 0 \quad \text{for } s \in [-h, 0[,
    \end{array}
    \right.
\end{equation}
where $X_0 \in \mathbb{R}^n$ is a fixed initial condition, 
whereas the control $U$ lies in the control space $\mathcal{U} \triangleq L_\mathcal{F}^2([0,T] , \mathbb{R}^m)$. The constant $h>0$ is a positive delay acting on the control input. 
Given $X_T \in \mathbb{R}^n$ and $\Sigma_T \in \mathcal{S}^{++}_n$, our goal is to find $U \in \mathcal{U}$ that steers the corresponding solution $X$ of \eqref{eq:base_system_linear_delayed_0} to
\begin{equation*}\label{eq:original_steering_problem}
\left( \begin{array}{cc} \espE[X(T)] \\ \Sigma_X(T) \end{array} \right) = \left( \begin{array}{cc} X_T \\ \Sigma_T \end{array} \right) . 
\end{equation*}
We call this problem the \textit{(mean-covariance) steering problem}. It boils down to finding the control that displaces the initial probability distribution of the state to a more desirable final distribution. To fulfil this control objective, we make the following assumption.

\begin{assumption}\label{asm:assumptions_on_the_dynamic}
Throughout the paper, we assume the following properties hold true: 
\begin{itemize}
    \item $A \in L^\infty([0,T] , \mathbb{R}^{n \times n}) $, $B \in L^\infty([0,T] , \mathbb{R}^{n \times m})$, $r \in L^{\infty}([0,T] , \mathbb{R}^{n})$ and $\sigma \in L^{\infty}([0,T] , \mathbb{R}^{n}) $. In particular, these mappings are deterministic.
    \item 
    The Grammian associated with the non-delayed deterministic system, defined as
    \begin{equation}\label{eq:definition_grammian}
            G_\tau^{T+h} \triangleq \int_\tau^{T+h} \hspace{-0.4cm}\Phi_A(T+h,s) B(s) B(s)^T \Phi_A(s, T+h)^T dt,
        \end{equation}
    is invertible for all $0 < \tau < T+h$. 
\end{itemize}
\end{assumption}
The first assumption on the dynamics is classical as it guarantees the well-posedness of the system \cite{yong_stochastic_1999}. The second assumption allows for the total controllability of the deterministic non-delayed system, meaning it can be controlled in any sub-interval of $[0,T+h]$ \cite{coron_control_2009}. Furthermore, we assume that the initial state $X_0$ is deterministic. While our approach can be extended straightforwardly to Gaussian random variables (as in \cite{chen_optimal_2016}), we maintain a null initial covariance for the sake of simplicity. The above assumptions are generic and usually satisfied by a wide class of systems. 

\subsection{System reduction}

We can reduce the steering problem to an easier one, where $X_0$ and $X_T$ are both 0 and where the drift $r(t)$ is the zero function. 
For this, 
let $X_r(t) \triangleq X_T \frac{t}{T} + X_0 \frac{T-t}{T}$ and $\overline{X}(t) \triangleq X(t) - X_r(t)$. The difference $\overline{X}(t)$ follows the dynamic:
\begin{equation}\label{eq:base_system_linear_delayed_null_mean}
\left\{
    \begin{array}{ll}
    d\overline{X}(t) &= \hspace{1em} \left( A(t) \overline{X}(t) + B(t) U(t - h) + \overline{r}(t)\right) dt \\
    &\hphantom{=}\hspace{1em}+ \sigma(t) dW_t, \\
    \overline{X}(0) &= \hspace{1em} 0 , \\
    U(s) &= \hspace{1em} 0 \quad \text{for } s \in [-h, 0[. 
    \end{array}
    \right.
\end{equation}
where $\overline{r}(t) \triangleq r(t) + \dot{X_r}(t) - A(t)X_r(t)$.
Steering the mean and the covariance of $X$ from $\left( \begin{array}{cc} X_0 \\ 0 \end{array} \right)$ to $\left( \begin{array}{cc} X_T \\ \Sigma_T \end{array} \right)$ is thus equivalent to steering the mean and covariance of $\overline{X}(t)$ from $\left( \begin{array}{cc} 0 \\ 0 \end{array} \right)$ to $\left( \begin{array}{cc} 0 \\ \Sigma_T \end{array} \right)$.
Indeed, adding a deterministic term to the controller compensates for the effect induced by the drift and the change of variables \cite{liu_optimal_2023}. More precisely, we consider
$$
U(t) = U_{feedback}(t) + U_{drift}(t),
$$
with
\begin{align*}
U_{drift}(t) = - &B(t+h)^T \Phi_A(t+h)^T G^{-1} \\
&\times\left( \int_0^T \Phi_A(T,s) \overline{r}(s) ds \right) .
\end{align*}
By leveraging similar computations to the ones detailed in \cite{coron_control_2009}, it can be verified that controlling system \eqref{eq:base_system_linear_delayed_null_mean} as required above boils down to steering the mean and covariance of the reduced system 
\begin{equation}\label{eq:base_system_linear_delayed_1}
\left\{
    \begin{array}{ll}
    dX(t) &= \hspace{1em} \left( A(t) X(t) + B(t) U_{feedback}(t - h) \right) dt \\
    &\qquad + \sigma(t) dW_t, \\
    X(0) &= \hspace{1em} 0 , \\
    U(s) &= \hspace{1em} 0 \quad \text{for } s \in [-h, 0[,
    \end{array}
    \right.
\end{equation}
from $\left( \begin{array}{cc} 0 \\ 0 \end{array} \right)$ to $\left( \begin{array}{cc} 0 \\ \Sigma_T \end{array} \right)$ with a control $U=U_{feedback} \in \mathcal{U}$ of zero mean.
Therefore, from now on, we only focus on this latter problem. Of particular interest is the case where $\Sigma_T$ is the smallest possible.


\section{Covariance steering: open-loop controls}\label{CC}

In this section, we study which state covariances can be reached by the solution of~\eqref{eq:base_system_linear_delayed_1} within a finite time frame. In particular, we show that there exists a lower bound on the state covariance under which \eqref{eq:base_system_linear_delayed_1} can not be steered. We then establish the existence of open-loop controls that achieve state covariances that are arbitrarily close to this lower bound.

\subsection{Extending the Artstein transformation}
We achieve the aforementioned goal by leveraging the Artstein transformation \cite{artstein_linear_1982} to our stochastic setting. 
\begin{definition}[Artstein transform]
Let $X$ be the process solving equation \eqref{eq:base_system_linear_delayed_null_mean}. The Artstein transform of $X$ is the following process, which is adapted to $\mathcal{F}$, 
\begin{equation}\label{eq:def_artstein_transform}
Y(t) \triangleq X(t) + \int_{t-h}^t \Phi_A(t,s+h) B(s+h) U(s) ds. 
\end{equation}
\end{definition}
It can be easily checked that the Artstein transform solves the following non-delayed stochastic equation 
\begin{equation}\label{eq:artstein_system_linear}
\left\{
    \begin{array}{ll}
    dY(t) &= \hspace{1em} \left( A(t) Y(t) + \overline{B} U(t) \right) dt + \sigma(t) dW_t, \\
    Y(0) &= \hspace{1em} 0 ,
    \end{array}
    \right.
\end{equation}
where $\overline{B}(t) \triangleq \Phi_A(t,t+h) B(t+h)$. 
We can now apply known results from non-delayed stochastic control from \cite{yong_stochastic_1999, liu_optimal_2023} to the Artstein transform $Y$ to steer \eqref{eq:artstein_system_linear} as desired. 
However, it is unclear how to compute the covariance of $X$, which we aim to estimate once the covariance of $Y$ is available. For this, we make use of the following:
\begin{lemma}
Let $X$ be the process solving equation \eqref{eq:base_system_linear_delayed_null_mean}, and $Y$ its Artstein transform. Then for all $t \in [h,T]$
\begin{equation}\label{eq:link_artstein_original} X(t) = \Phi_A(t,t-h) Y(t-h) + \int_{t-h}^t \Phi_A(t,s) \sigma(s) dW_s . 
\end{equation}
\end{lemma}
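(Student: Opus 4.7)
The identity is essentially a bookkeeping computation that combines the variation of constants formula applied to the linear SDE on the sliding window $[t-h,t]$ with the cocycle property of the fundamental matrix $\Phi_A$. The key is to recognize, after a time shift in the integral involving $U$, the very expression that defines the Artstein transform at time $t-h$.

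\textbf{Step 1: Variation of constants on $[t-h,t]$.} Fix $t \in [h,T]$. Treating $B(\cdot)U(\cdot - h)$ and $\sigma(\cdot)$ as inhomogeneous forcings, the standard variation of constants formula for linear SDEs yields
\begin{equation*}
X(t) = \Phi_A(t,t-h)\,X(t-h) + \int_{t-h}^{t} \Phi_A(t,s) B(s) U(s-h)\,ds + \int_{t-h}^{t} \Phi_A(t,s) \sigma(s)\,dW_s .
\end{equation*}

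\textbf{Step 2: Shift the control integral and factor $\Phi_A(t,t-h)$.} Changing variables $\tau = s - h$ in the control integral transforms it into $\int_{t-2h}^{t-h} \Phi_A(t,\tau+h) B(\tau+h) U(\tau)\,d\tau$. The cocycle identity $\Phi_A(t,\tau+h) = \Phi_A(t,t-h)\,\Phi_A(t-h,\tau+h)$, valid for every $\tau \in [t-2h,t-h]$, then allows us to pull $\Phi_A(t,t-h)$ outside of the integral, giving
\begin{equation*}
\int_{t-h}^{t} \Phi_A(t,s) B(s) U(s-h)\,ds = \Phi_A(t,t-h)\int_{t-2h}^{t-h}\Phi_A(t-h,\tau+h) B(\tau+h) U(\tau)\,d\tau .
\end{equation*}

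\textbf{Step 3: Recognize the Artstein transform at $t-h$.} By definition \eqref{eq:def_artstein_transform} applied at time $t-h \in [0,T-h]$,
\begin{equation*}
Y(t-h) = X(t-h) + \int_{t-2h}^{t-h}\Phi_A(t-h,\tau+h) B(\tau+h) U(\tau)\,d\tau,
\end{equation*}
where the convention $U(s)=0$ for $s\in[-h,0)$ makes the formula consistent for every $t \in [h,2h]$ (since part of the integration range then falls in $[-h,0)$). Substituting this expression into the output of Step 2 and plugging the result back into Step 1 collapses the two deterministic terms into $\Phi_A(t,t-h) Y(t-h)$, leaving exactly the stated formula.

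\textbf{Main obstacle.} There is no real analytical difficulty: the proof is just an algebraic manipulation. The only point deserving attention is the correct application of the cocycle property of $\Phi_A$, which is what turns the delayed control integral in the SDE into the ``predictor'' correction hidden inside $Y(t-h)$, and the careful use of the convention $U \equiv 0$ on $[-h,0)$ to ensure that the identity holds uniformly on $[h,T]$, including the boundary case $t=h$.
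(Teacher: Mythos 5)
Your proof is correct and follows essentially the same route as the paper's: variation of constants on $[t-h,t]$, a change of variable in the control integral, the cocycle identity for $\Phi_A$, and identification of the resulting integral with the correction term in the definition of $Y(t-h)$. The only cosmetic difference is the order of operations (you factor $\Phi_A(t,t-h)$ out before substituting the Artstein transform, whereas the paper substitutes first and then cancels), plus your welcome but inessential remark on the convention $U\equiv 0$ on $[-h,0)$ for $t\in[h,2h]$.
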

\begin{proof}
Using the analytic formula for linear SDEs \cite{yong_stochastic_1999}, we can establish the following relation between $X(t)$ and $X(t-h)$:
\begin{align*}
    X(t) = & \Phi_A(t,t-h) X(t-h) \\
    & + \int_{t-h}^t \Phi_A(t,s) B(s) U(s-h) ds \\
    & + \int_{t-h}^t \Phi_A(t,s) \sigma(s) dW_s.
\end{align*}
By expressing $X(t-h)$ in terms of $Y(t-h)$, a change of variable in the regular integral yields
\begin{align*}
X & (t) =  \Phi_A(t,t-h) Y(t-h) + \int_{t-h}^t \Phi_A(t,s) \sigma(s) dW_s \\
& - \Phi_A(t,t-h)\int_{t-2h}^{t-h} \Phi_A(t-h,s+h) B(s+h) U(s) ds \\
& + \int_{t-2h}^{t-h} \Phi_A(t,s+h) B(s+h) U(s) ds ,
\end{align*}
from which we infer the conclusion thanks to the identity $\Phi_A(t,t-h)\Phi_A(t-h,s+h) = \Phi_A(t,s+h)$. 
\end{proof}
Equation \eqref{eq:link_artstein_original} provides important insights for the controllability of the process $X$. In particular, it states $X(t)$ can only be controlled through $Y(t-h)$, in that the additional noise term $\int_{t-h}^t \Phi_A(t,s) \sigma(s) dW_s$ can not be controlled. 
Let $\Sigma_Y(t) \triangleq \espE[ (Y(t) - \espE[Y(t)]) (Y(t) - \espE[Y(t)])^T ]$ denote the covariance of the process $Y$. Equation \eqref{eq:link_artstein_original} can be further manipulated to express $\Sigma_X$ as a function of $\Sigma_Y$ as explained in the following lemma.
\begin{lemma}\label{lem:relation_covariance_artstein_and_original}
Let $X$ be the process solving equation \eqref{eq:base_system_linear_delayed_null_mean}, and let $Y$ be its Artstein transform. For all $t \in [h,T]$,
\begin{equation}\label{eq:eq_diff_artstein_transform}
\begin{split}
\Sigma_X(t) = &\Phi_A(t,t-h) \Sigma_Y(t-h) \Phi_A(t,t-h)^T \\
&+ \Sigma_{\min}(t) ,
\end{split}
\end{equation}
where
$$
\Sigma_{\min}(t) \triangleq \int_{t-h}^t \Phi_A(t,s) \sigma(s) \sigma(s)^T \Phi_A(t,s)^T ds .
$$
\end{lemma}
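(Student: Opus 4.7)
The plan is to start directly from equation \eqref{eq:link_artstein_original}, which gives a clean additive decomposition of $X(t)$ into a $\Phi_A(t,t-h) Y(t-h)$ part plus an Itô integral over $[t-h,t]$. The strategy is to compute the mean of $X(t)$, subtract it, and then expand the outer product that defines $\Sigma_X(t)$, isolating four terms and showing that the two cross-terms vanish while the two diagonal terms produce exactly the claimed expression.

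First, taking expectation in \eqref{eq:link_artstein_original} and using the fact that the Itô integral $\int_{t-h}^t \Phi_A(t,s) \sigma(s) dW_s$ has zero mean (its integrand is deterministic and square integrable), I obtain $\espE[X(t)] = \Phi_A(t,t-h)\, \espE[Y(t-h)]$. Subtracting this from \eqref{eq:link_artstein_original} yields the centered identity
\begin{equation*}
X(t) - \espE[X(t)] = \Phi_A(t,t-h)\bigl(Y(t-h) - \espE[Y(t-h)]\bigr) + \int_{t-h}^t \Phi_A(t,s) \sigma(s) dW_s .
\end{equation*}

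Next, I form the outer product $(X(t)-\espE[X(t)])(X(t)-\espE[X(t)])^T$ and take expectation, which produces four terms. The two diagonal ones are straightforward: the first gives $\Phi_A(t,t-h) \Sigma_Y(t-h) \Phi_A(t,t-h)^T$ by definition of $\Sigma_Y$, and the Itô isometry applied component-wise (which, for a deterministic matrix-valued integrand, states that the expected outer product of the Itô integral equals $\int_{t-h}^t \Phi_A(t,s) \sigma(s) \sigma(s)^T \Phi_A(t,s)^T ds$) gives exactly $\Sigma_{\min}(t)$.

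The main obstacle is controlling the two cross-terms, i.e.\ showing that
\begin{equation*}
\espE\!\left[ \Phi_A(t,t-h)\bigl(Y(t-h)-\espE[Y(t-h)]\bigr)\left(\int_{t-h}^t \Phi_A(t,s) \sigma(s) dW_s\right)^{\!T}\right] = 0,
\end{equation*}
and symmetrically for the transpose. For this I would invoke the $\mathcal{F}$-adaptedness setup: $Y(t-h)$ is $\mathcal{F}_{t-h}$-measurable (since $U \in \mathcal{U}$ is adapted and $Y$ solves \eqref{eq:artstein_system_linear}), while the Itô integral $\int_{t-h}^t \Phi_A(t,s)\sigma(s)\,dW_s$ is a centered $\mathcal{F}_{t-h}$-conditionally Gaussian random variable independent of $\mathcal{F}_{t-h}$ (since $\sigma$ is deterministic and the Wiener increments on $[t-h,t]$ are independent of $\mathcal{F}_{t-h}$). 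Conditioning on $\mathcal{F}_{t-h}$ and using the tower property then makes these cross-terms vanish. Combining the four contributions yields \eqref{eq:eq_diff_artstein_transform}.
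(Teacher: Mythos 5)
Your proposal is correct and follows essentially the same route as the paper: both start from the decomposition \eqref{eq:link_artstein_original}, apply the It\^o isometry to identify the noise contribution with $\Sigma_{\min}(t)$, and use the fact that $Y(t-h)$ is $\mathcal{F}_{t-h}$-measurable while the stochastic integral over $[t-h,t]$ is independent of $\mathcal{F}_{t-h}$ to eliminate the cross-contribution. Your tower-property argument for the vanishing cross-terms is a slightly more direct packaging of the independence step that the paper establishes via approximating Riemann sums of Wiener increments, but the substance is the same.
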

\begin{proof}
Fix $t \in [h,T]$.  We first utilize a standard result from stochastic calculus that states that, if $\gamma_1$ and $\gamma_2$ two deterministic functions in $L^2([0,T], \mathbb{R})$, then :
\begin{equation}\label{eq:quadratic_variation_of_stochastic_integral}
\begin{split}
\espE \left[  \left( \int_0^t \gamma_1(s) dW_s  \right) \left( \int_0^t \gamma_2(s) dW_s  \right) \right] = \int_0^t \gamma_1(s) \gamma_2(s) ds
\end{split}
\end{equation}
From this result, we have that
\begin{equation*}
\begin{split}
\espE & \left[  \left( \int_0^t \Phi_A(t,s) \sigma(s) dW_s \right) \left( \int_0^t \Phi_A(t,s) \sigma(s) dW_s  \right)^T \right] \\
& =  \int_0^t \Phi_A(t,s) \sigma(s) \sigma(s)^T \Phi_A(t,s)^T ds = \Sigma_{min}(t)
\end{split}
\end{equation*}
Since the covariance of the sum of two independent random variables is the sum of their covariances, what is left to prove is that $\Phi_A(t,t-h) Y(t-h)$ is independent of $\int_{t-h}^t \Phi_A(t,s) \sigma(s) dW_s$. 
Since $X$ is adapted to $\mathcal{F}$, by definition $Y$ is also adapted to $\mathcal{F}$, and in particular $Y(t-h)$ is $\sigma( W_s: 0 \le s \le t-h )  $-measurable. To conclude, it is sufficient to prove that $\int_{t-h}^t \Phi_A(t,s) \sigma(s) dW_s$ is independent from $W_r$, for $r$ in $[0,t-h]$. Now, if $t-h = s_0 < ... < s_N = t$ is a sequence of partitions of $[t-h,t]$ whose size tends to zero, thanks to the properties of the Îto integral and Assumption 1 we have that
\begin{align*}
    &\int_{t-h}^t \Phi_A(t,s) \sigma(s) dW_s = \\
    &\lim_{N \rightarrow \infty} \sum^{N-1}_{i=0} \Phi_A(t,s_i) \sigma(s_i) (W_{s_{i+1}} - W_{s_i}) , \ \textnormal{in} \ L^2 .
\end{align*}
For every $r \in [0,t-h]$, it thus follows that
\begin{align*}
    &\mathbb{E}\left[ W_r \int_{t-h}^t \Phi_A(t,s) \sigma(s) dW_s \right] = \\
    &= \lim_{N \rightarrow \infty} \sum^{N-1}_{i=0} \Phi_A(t,s_i) \sigma(s_i) \mathbb{E}\left[ W_r (W_{s_{i+1}} - W_{s_i}) \right] = 0 ,
\end{align*}
due to the independence properties of Wiener processes. This concludes the proof.
\end{proof}
The covariance of the state $X$ comprises two components: one determined by the covariance of $Y$, which is under our complete control, and another determined by the covariance of $\int_{t-h}^t \Phi_A(t,s) \sigma(s) dW_s$ which remains unaffected by the control term. Consequently, the state covariance is inherently greater than the latter covariance, presenting a limitation as there is no means to reduce it.

\subsection{Controllability of the Artstein transform}
Our goal is now to minimize the covariance as much as possible and propose control strategies that can approach the lower covariance bound as closely as wanted.
The following result is classical \cite[Theorem 13]{mahmudov_controllability_2001}: 

\begin{theorem}[Controllability of non-delayed SDEs]\label{thm:controllability_stochastic_artstein}
If the Grammian $\overline{G}_\tau^T$ associated to the deterministic part of equation \eqref{eq:artstein_system_linear}, 
defined as
\begin{equation}\label{eq:grammian_artstein} 
\overline{G}_\tau^T \triangleq \int_\tau^T \Phi_A(T,s) \overline{B}(s) \overline{B}(s)^T \Phi_A(T,s)^T ds
\end{equation}
is invertible for all $\tau$ in $[0,T]$, then for every $\Sigma_T \in \mathcal{S}^{++}_n$ there exists $U \in \mathcal{U}$ such that the solution $Y$ to \eqref{eq:artstein_system_linear} associated with $U$ is such that $\mathbb{E}[Y(T-h)] = 0$ and $\Sigma_Y(T-h) = \Sigma_T$.
\end{theorem}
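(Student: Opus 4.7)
The plan is to derive this result as a direct consequence of the cited classical exact controllability theorem for non-delayed linear SDEs, the only effort being to build an appropriate terminal random variable with the prescribed mean and covariance. Specifically, the cited theorem of Mahmudov asserts that, when the associated deterministic Grammian is invertible on every subinterval, the Artstein-transformed system~\eqref{eq:artstein_system_linear} is exactly $L^2$-controllable, namely, for every $\xi \in L^2(\Omega, \mathcal{F}_{T-h}, \mathbb{P}; \mathbb{R}^n)$ there exists $U \in \mathcal{U}$ such that the solution $Y$ to~\eqref{eq:artstein_system_linear} satisfies $Y(T-h) = \xi$ almost surely. Once this is granted, it suffices to exhibit a target $\xi$ with $\mathbb{E}[\xi] = 0$ and $\mathbb{E}[\xi \xi^T] = \Sigma_T$.

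First, I would construct such a $\xi$ explicitly. Partition $[0, T-h]$ into $n$ disjoint subintervals $[t_{i-1}, t_i]$ and define $\eta_i \triangleq (W_{t_i} - W_{t_{i-1}})/\sqrt{t_i - t_{i-1}}$. By the independent-increments property of the Wiener process, $\eta \triangleq (\eta_1, \dots, \eta_n)^T$ is a standard Gaussian vector that is $\mathcal{F}_{T-h}$-measurable and square-integrable. Setting $\xi \triangleq \Sigma_T^{1/2} \eta$ yields a random variable in $L^2(\Omega, \mathcal{F}_{T-h}, \mathbb{P}; \mathbb{R}^n)$ with mean zero and covariance $\Sigma_T$, as needed.

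Next, I would invoke the cited exact controllability result to obtain $U \in \mathcal{U}$ driving $Y(T-h) = \xi$ almost surely. Taking expectations gives $\mathbb{E}[Y(T-h)] = \mathbb{E}[\xi] = 0$, and the definition of the covariance yields $\Sigma_Y(T-h) = \mathbb{E}[\xi \xi^T] = \Sigma_T$, which concludes the proof.

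The main obstacle is conceptual rather than technical: one must be convinced that the invertibility of the deterministic Grammian $\overline{G}_\tau^T$ for all $\tau$ truly suffices to ensure exact $L^2$-controllability of the SDE, and not merely approximate or mean-controllability. This is precisely the content of the cited theorem, whose proof builds the required $U$ via a martingale-representation argument: decomposing any target $\xi = \mathbb{E}[\xi] + \int_0^{T-h} \phi(s)\, dW_s$ through the Itô representation theorem, one uses the pseudoinverse of the Grammian on sliding subintervals $[\tau, T-h]$ to simultaneously match the deterministic component $\mathbb{E}[\xi]$ and compensate, path-wise, the diffusion mismatch $\phi(s) - \Phi_A(T-h,s)\sigma(s)$. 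Since this construction is standard and available in the literature, I would limit the exposition to the explicit choice of $\xi$ above and cite the underlying theorem for the remaining machinery.
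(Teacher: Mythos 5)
Your construction of a terminal random variable $\xi$ with the prescribed mean and covariance is fine, but the reduction you rest it on --- exact $L^2$-controllability of \eqref{eq:artstein_system_linear}, i.e., the ability to reach an \emph{arbitrary} $\xi \in L^2(\Omega,\mathcal{F}_{T-h},\mathbb{P};\mathbb{R}^n)$ exactly with an adapted square-integrable control --- is not what invertibility of the Grammian delivers, and it is known to fail for systems of this type. When the control enters only the drift and the noise is additive, the set of states reachable by $L^2_{\mathcal{F}}$ open-loop controls is a dense but \emph{proper} subset of $L^2(\Omega,\mathcal{F}_{T-h},\mathbb{P};\mathbb{R}^n)$; in fact exact controllability requires at least that $B$ have full row rank, a condition not implied by Assumption~\ref{asm:assumptions_on_the_dynamic} --- this is precisely the obstruction recorded in \cite{wang_exact_2017}, which the paper itself invokes. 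Concretely, writing $\xi=\int_0^{T-h}\phi(s)\,dW_s$, exact steering forces
\begin{equation*}
\int_0^{T-h}\Phi_A(T-h,s)\overline{B}(s)U(s)\,ds \;=\; \int_0^{T-h}\bigl(\phi(s)-\Phi_A(T-h,s)\sigma(s)\bigr)\,dW_s ,
\end{equation*}
and the sliding-Grammian construction you allude to yields a control whose $L^2(\Omega\times[0,T-h])$ norm behaves like $\int_0^{T-h}\Vert \phi(s)-\Phi_A(T-h,s)\sigma(s)\Vert^2\,(T-h-s)^{-1}\,ds$, since $\Vert (\overline{G}_\tau^{T})^{-1}\Vert$ blows up like $(T-\tau)^{-1}$. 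For your choice $\xi=\Sigma_T^{1/2}\eta$ the representer $\phi$ is piecewise constant and does not match $\Phi_A(T-h,\cdot)\sigma(\cdot)$ on the last subinterval, so this integral diverges and the required control is not in $\mathcal{U}$.

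The statement only asks for a prescribed mean and covariance, which is strictly weaker than pathwise exact steering, and the classical argument behind the cited result works directly at the level of second moments: under a linear feedback $U=-K(t)Y$ the covariance obeys the Lyapunov equation $\dot{\Sigma}_Y=(A-\overline{B}K)\Sigma_Y+\Sigma_Y(A-\overline{B}K)^T+\sigma\sigma^T$, and invertibility of the Grammian on every subinterval is exactly what makes every $\Sigma_T\in\mathcal{S}^{++}_n$ attainable at the terminal time (this is the Riccati-based route of \cite{chen_optimal_2016,liu_optimal_2023} reused in Theorem~\ref{thm:covariance_steering_artstein}). If you insist on an open-loop construction, you must at least pick $\phi$ so that $\phi-\Phi_A(T-h,\cdot)\sigma(\cdot)$ vanishes near $t=T-h$ while still satisfying $\int_0^{T-h}\phi(s)\phi(s)^T\,ds=\Sigma_T$ (possible since $\Sigma_T\succ 0$ dominates the small tail contribution of $\Phi_A\sigma$), and then verify square-integrability of the compensating control. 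As written, the appeal to exact $L^2$-controllability is a genuine gap.
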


We may combine Lemma \ref{lem:relation_covariance_artstein_and_original} with Theorem \ref{thm:controllability_stochastic_artstein} to infer open-loop controllability of the original control system \eqref{eq:base_system_linear_delayed_1}.

\begin{theorem}[Controllability of delayed SDEs]\label{thm:controllability_stochastic_delayed}
Let $\Sigma_T \in \mathcal{S}^{++}_n$. Under Assumption 1, there exists $U \in \mathcal{U}$ such that the solution $X$ to \eqref{eq:base_system_linear_delayed_null_mean} associated with $U$ verifies $\mathbb{E}[X(T)] = 0$ and $\Sigma_X(T) = \Sigma_T + \Sigma_{\min}(T)$.
\end{theorem}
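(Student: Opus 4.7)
The plan is to combine Lemma~\ref{lem:relation_covariance_artstein_and_original} with Theorem~\ref{thm:controllability_stochastic_artstein} applied to the Artstein transform $Y$. Evaluating the covariance identity~\eqref{eq:eq_diff_artstein_transform} at $t=T$, which lies in $[h,T]$, shows that reaching $\Sigma_X(T)=\Sigma_T+\Sigma_{\min}(T)$ is equivalent to steering $Y$ so that $\Sigma_Y(T-h)=\tilde{\Sigma}\triangleq \Phi_A(T,T-h)^{-1}\,\Sigma_T\,\Phi_A(T,T-h)^{-T}$ and $\mathbb{E}[Y(T-h)]=0$. Since $\Phi_A(T,T-h)$ is invertible and $\Sigma_T\in\mathcal{S}_n^{++}$, the matrix $\tilde{\Sigma}$ is still in $\mathcal{S}_n^{++}$, hence an admissible target for Theorem~\ref{thm:controllability_stochastic_artstein}.

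The technical step is to verify that the Grammian hypothesis of Theorem~\ref{thm:controllability_stochastic_artstein} for the Artstein system~\eqref{eq:artstein_system_linear} on $[0,T-h]$ follows from Assumption~\ref{asm:assumptions_on_the_dynamic}. Using the definition $\overline{B}(s)=\Phi_A(s,s+h)B(s+h)$ together with the composition identity $\Phi_A(T-h,s)\Phi_A(s,s+h)=\Phi_A(T-h,s+h)$, and then performing the change of variable $u=s+h$, the Artstein Grammian $\overline{G}_\tau^{T-h}$ rewrites as $\int_{\tau+h}^T \Phi_A(T-h,u)B(u)B(u)^T\Phi_A(T-h,u)^T\,du$. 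Factoring out $\Phi_A(T-h,T+h)$ on both sides relates this to the classical Grammians of type~\eqref{eq:definition_grammian}, whose invertibility on any subinterval of $[0,T+h]$ is precisely the content of Assumption~\ref{asm:assumptions_on_the_dynamic}. Theorem~\ref{thm:controllability_stochastic_artstein} then yields a control $U\in\mathcal{U}$ that achieves $\Sigma_Y(T-h)=\tilde{\Sigma}$ and $\mathbb{E}[Y(T-h)]=0$.

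To conclude, I plug these values back into~\eqref{eq:eq_diff_artstein_transform} to obtain $\Sigma_X(T)=\Phi_A(T,T-h)\tilde{\Sigma}\,\Phi_A(T,T-h)^T+\Sigma_{\min}(T)=\Sigma_T+\Sigma_{\min}(T)$, and taking expectations in~\eqref{eq:link_artstein_original}, together with the zero-mean property of the Itô integral, yields $\mathbb{E}[X(T)]=\Phi_A(T,T-h)\,\mathbb{E}[Y(T-h)]=0$. The main obstacle is the bookkeeping needed to translate Assumption~\ref{asm:assumptions_on_the_dynamic} into the Grammian hypothesis of Theorem~\ref{thm:controllability_stochastic_artstein} through the Artstein transform; no new idea beyond elementary fundamental-matrix identities is required, but the interpretation of ``total controllability'' described after Assumption~\ref{asm:assumptions_on_the_dynamic} is what makes the invertibility of the shifted-interval Grammians usable.
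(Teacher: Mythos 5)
Your proposal is correct and follows essentially the same route as the paper: reduce to steering the Artstein transform via Theorem~\ref{thm:controllability_stochastic_artstein}, verify the Grammian hypothesis by rewriting $\overline{B}(s)=\Phi_A(s,s+h)B(s+h)$ and shifting the integration variable so that Assumption~\ref{asm:assumptions_on_the_dynamic} applies, and then read off $\Sigma_X(T)$ and $\mathbb{E}[X(T)]$ from Lemma~\ref{lem:relation_covariance_artstein_and_original} and equation~\eqref{eq:link_artstein_original}. The only difference is bookkeeping: you work with the Artstein Grammian over $[\tau,T-h]$ (hence need the ``controllable on any subinterval'' reading of Assumption~\ref{asm:assumptions_on_the_dynamic}, which the paper endorses in the text following that assumption), whereas the paper manipulates $\overline{G}_\tau^{T}$ and lands exactly on $G_{\tau+h}^{T+h}$; your write-up is otherwise slightly more explicit about the conjugated target $\tilde{\Sigma}$ and the mean condition.
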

\begin{proof}
To apply Theorem \ref{thm:controllability_stochastic_artstein}, we need to prove that Assumption 1 guarantees the invertibility of the modified Grammian $\overline{G}_\tau^T$. For this, we may compute
\begin{equation*}
\begin{split}
\overline{G}_\tau^T & = \int_\tau^T \Phi_A(T,s) \overline{B}(s) \overline{B}(s)^T \Phi_A(T,s)^T ds \\
& = \int_\tau^T \Phi_A(T,s) \Phi_A(s,s+h) B(s+h) B(s+h)^T\\
&\hphantom{=}( \Phi_A(T,s) \Phi_A(s,s+h) )^T ds \\
& = \int_\tau^T \Phi_A(T,s+h) B(s+h) B(s+h)^T \Phi_A(T,s+h)^T ds \\
& = \Phi_A(T,T+h) G_{\tau+h}^{T+h} \Phi_A(T,T+h)^T,
\end{split}
\end{equation*}
which is invertible thanks to Assumption 1. Therefore, there exists $U \in \mathcal{U}$ such that the solution $Y$ to \eqref{eq:artstein_system_linear} associated with $U$ is such that $\mathbb{E}[Y(T)] = 0$ and $\Sigma_Y(T-h) = \Phi_A(t,t-h)^{-1} \Sigma_T \Phi_A(t,t-h)^{-T}$, and we conclude.
\end{proof}
Thanks to Theorem \ref{thm:controllability_stochastic_delayed}, we can steer the state $X$ to a final covariance that can be arbitrarily close to the limit $\Sigma_{\min}(T)$, by selecting $\Sigma_T$ arbitrarily small.
The control derived in this section to guide the system is an open-loop stochastic control, posing challenges for numerical implementation \cite{touzi_introduction_2013}.  Consequently, we need to explore the possibility of addressing the steering problem using a feedback controller.

\section{Covariance steering: close-loop controls}\label{CS}

In the previous section, we solved the steering problem by means of open-loop controls. Such control laws are known to be very expensive to compute. To fix this hindrance, we show in this section, the existence of tractable feedback controls that solve the steering problem. We achieve this result by leveraging and combining recent advances in covariance steering techniques, e.g., \cite{ chen_optimal_2016, chen_optimal_2016-1 ,liu_optimal_2023}, with the previous Artstein transformation. In particular, the covariance of solutions to equation~\eqref{eq:base_system_linear_delayed_1} can be steered to $\Sigma_T + \Sigma_{\min}$, with $\Sigma_T \in \mathcal{S}_n^{++}$, via feedback controls. 

In \cite{liu_optimal_2023}, the authors prove that under the controllability conditions of Theorem \ref{thm:controllability_stochastic_artstein}, there exists a unique feedback control $U \in \mathcal{U}$ that steers the covariance of $Y$ solution to \eqref{eq:artstein_system_linear} to a desired covariance, while minimizing the functional cost
$$
J(U) \triangleq \espE \left[ \int_0^T U(t)^T R U(t) dt \right] ,
$$
where $R \in \mathcal{S}^{++}_m$. 
We can show the existence of tractable feedback controls that solve the steering problem by combining \cite{liu_optimal_2023} with our previous results as follows:
\begin{theorem}\label{thm:covariance_steering_artstein}
Let $\Sigma_T \in \mathcal{S}^{++}_n$ and $Y$ denote solutions to \eqref{eq:artstein_system_linear}. Under Assumption 1, there exists a unique feedback control $U^*$ of the form 
\begin{equation}\label{eq:feedback_controller_artstein}
U^*(t) = -R(t)^{-1} \overline{B}(t)^T \Pi(t) Y(t), 
\end{equation}
that minimizes the functional cost $J$ under the constraint $\Sigma_Y(T-h) = \Sigma_T$. Moreover the gain $\Pi(t)$ is the solution to the following Riccati-type ordinary differential equation
\begin{equation}\label{eq:riccati_ODE_covariance_steering}
\left\{
    \begin{array}{ll}
    \dot{\Pi} & = -A^T \Pi - \Pi A + \Pi B R^{-1} B^T \Pi\\
    \Pi(0) &= \Pi_0,
    \end{array}
    \right.
\end{equation}
where $\Pi_0$ is the unique solution of the system 
\begin{equation}\label{eq:system_initial_value_riccati_steering}
\Sigma_T = f(\Pi_0),
\end{equation}
where $f$ is a homeomorphism\footnote{For conciseness, we do not provide the full formula of $f$, see \cite{liu_optimal_2023} for more details.}.
\end{theorem}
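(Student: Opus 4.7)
The plan is to reduce Theorem \ref{thm:covariance_steering_artstein} to the covariance steering result established in \cite{liu_optimal_2023} for non-delayed linear stochastic systems. Since the Artstein transform \eqref{eq:def_artstein_transform} replaces the delayed SDE \eqref{eq:base_system_linear_delayed_1} with the non-delayed SDE \eqref{eq:artstein_system_linear} while preserving the control $U$ (and hence the cost functional $J$), the minimization of $J$ subject to $\Sigma_Y(T-h)=\Sigma_T$ is exactly a classical covariance steering problem on the horizon $[0,T-h]$ for the transformed dynamics, to which \cite{liu_optimal_2023} applies verbatim.

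The first step is to verify that the hypotheses required by \cite{liu_optimal_2023} are fulfilled by \eqref{eq:artstein_system_linear} on the interval $[0,T-h]$. The only non-trivial such hypothesis is the invertibility of the modified Grammian $\overline{G}_\tau^{T-h}$ for every $\tau \in [0,T-h)$. This follows from the identity
\begin{equation*}
\overline{G}_\tau^{T-h} = \Phi_A(T-h,T)\, G_{\tau+h}^{T}\, \Phi_A(T-h,T)^T,
\end{equation*}
already derived in the proof of Theorem \ref{thm:controllability_stochastic_delayed}, combined with Assumption \ref{asm:assumptions_on_the_dynamic}, which guarantees invertibility of $G_{\tau+h}^{T}$ for every $\tau \in [0,T-h)$.

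Second, I would invoke the main theorem of \cite{liu_optimal_2023} on the interval $[0,T-h]$ with initial covariance $\Sigma_Y(0)=0$ and prescribed terminal covariance $\Sigma_T \in \mathcal{S}_n^{++}$. This yields the existence and uniqueness of a feedback minimizer of the form \eqref{eq:feedback_controller_artstein}, where the gain $\Pi(t)$ satisfies the Riccati ODE \eqref{eq:riccati_ODE_covariance_steering}, and where the initial datum $\Pi_0$ is the unique preimage of $\Sigma_T$ under the homeomorphism $f$ built in \cite{liu_optimal_2023} (so that \eqref{eq:system_initial_value_riccati_steering} holds). Uniqueness of $U^*$ follows from the strict convexity of $J$ combined with the linearity of the terminal covariance constraint.

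The main delicate point, which I would address by leaning on the construction in \cite{liu_optimal_2023}, is the fact that the initial covariance is degenerate, $\Sigma_Y(0)=0$. The homeomorphism $f$ used to select $\Pi_0$ is built precisely to handle this situation: rather than mapping terminal covariances to initial ones (which would be singular here), it parametrizes admissible terminal covariances by the initial value of the Riccati gain. Once this delicate initialization is granted from \cite{liu_optimal_2023}, the remaining computations, namely verifying that the candidate feedback \eqref{eq:feedback_controller_artstein} steers the mean to zero (which is immediate since $Y(0)=0$ and the feedback is linear in $Y$) and achieves the prescribed terminal covariance, are standard and follow from a direct application of It\^o's formula to $Y(t)^T \Pi(t) Y(t)$ together with the Riccati equation \eqref{eq:riccati_ODE_covariance_steering}.
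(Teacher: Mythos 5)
Your proposal is correct and follows essentially the same route as the paper: the paper's proof is a one-line reduction to \cite[Lemma 6]{liu_optimal_2023}, with the required Grammian invertibility supplied by the computation already carried out in the proof of Theorem \ref{thm:controllability_stochastic_delayed}, exactly as you do. Your additional remarks (the degenerate initial covariance, uniqueness via strict convexity) are details the paper delegates entirely to \cite{liu_optimal_2023}, so there is no substantive difference in approach.
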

\begin{proof}
   The proof of this theorem is a direct consequence of \cite[Lemma 6]{liu_optimal_2023}, whose assumptions are satisfied thanks to Theorem \ref{thm:controllability_stochastic_delayed}.
\end{proof}

Similarly to what has been done in the previous section, once we can control the state covariance of the process $Y$, we can leverage the relation given by lemma \ref{lem:relation_covariance_artstein_and_original} and steer the state covariance of $X$ to any symmetric positive definite matrix above the lower bound $\Sigma_M$. 

This method still presents some limitations, mainly: 
\begin{itemize}
    \item Computing the controller requires solving equation \eqref{eq:system_initial_value_riccati_steering}, which, although it can be done through root-finding algorithms, can be expensive as the function $f$ is not easy to compute.
    \item There are no guarantees on the value of the covariance along the trajectory.
\end{itemize}
In the next section, we therefore introduce an optimal control-based approach to minimizing the covariance along the trajectory.

\section{Global-in-time covariance steering via close-loop controls}\label{OC}


In the previous section, we showed the existence of tractable feedback controls that solve the steering problem. Such control laws may not guarantee that the covariance remains small during the whole control horizon $[0,T]$. In this section, our objective is to establish an efficiently implementable control strategy that keeps the state covariance consistently low along its trajectory, and not just at the final time.
For this, we propose to compute controllers that enable approaching any neighbor of the the covariance threshold via techniques from Linear Quadratic (LQ) control \cite{yong_stochastic_1999}. The goal consists of minimizing the quadratic functional cost $J_R$, 
defined as
\begin{equation}\label{eq:def_quadratic_optimal_cost_2}
\begin{split}
    J_R(U,X) \triangleq & \espE \left[ \int_h^T X(t)^T Q(t) X(t) dt + X(T)^T G X(T) \right] \\
    & + \espE \left[\int_0^{T-h} U(t)^T R(t) U(t) dt \right],
\end{split}
\end{equation}
where $Q \in L^\infty([0,T], \mathcal{S}_n^+)$, $G \in \mathcal{S}_n^+$ and $R \in L^\infty([0,T], \mathcal{S}_n^{++})$. This functional cost penalizes the covariance along the trajectory, the final covariance, as well as the control effort.
Our problem thus states:
\begin{equation}\label{eq:problem_min_cost_X}
    \min_{U \in \mathcal{U}} J_R(U,X) , \quad \text{$X$ solves SDE \eqref{eq:base_system_linear_delayed_null_mean}.}
\end{equation}
To effectively solve \eqref{eq:problem_min_cost_X}, using equation \eqref{eq:link_artstein_original} we replace the process $X$ with the process $Y$ in $J_R$. The benefit of this transformation is that optimal control techniques for non-delayed systems may be leveraged, e.g., \cite{yong_stochastic_1999}.
Under some additional assumptions on the dynamics, we claim that, when the penalization on the control effort via $R$ approaches zero, the controls solutions to \eqref{eq:problem_min_cost_X} enable the covariance of the delayed SDE to track any neighbor of the threshold covariance.

\subsection{Optimal control for delayed SDEs.}
We may change variable in \eqref{eq:problem_min_cost_X} as follows. For $t > h$, let us denote $V_{min,Q}(t)$ the following positive real number: 
$$
V_{min,Q}(t) \triangleq \int_{t-h}^t \sigma(s)^T \Phi_A(t,s)^T Q(t) \Phi_A(t,s) \sigma(s) ds.
$$
\begin{lemma}\label{lem:rewrite_cost_artstein_nondelayed}
The cost function $J_R$ can be written in terms of the state $Y$ as follows:
\begin{equation}\label{eq:def_quadratic_optimal_cost_artstein}
\begin{split}
J_R(U,X) & = \espE \left[ \int_0^{T-h} Y(t)^T \overline{Q}(t) Y(t) dt \right] \\
& + \espE \left[ Y(T-h)^T \overline{G} Y(T-h) \right] \\
& + \espE \left[ \int_0^{T-h} U(t)^T R(t) U(t) dt \right] \\
& + \int_h^T V_{min,Q}(t) dt + V_{min,G}(T),
\end{split}
\end{equation}
where $$\overline{Q}(t) \triangleq \Phi_A(t+h,t)^T Q(t+h) \Phi_A(t+h,t) $$ and $$\overline{G} \triangleq \Phi_A(T+h,T)^T G \Phi_A(T+h,T) $$ are symmetric positive matrices.
\end{lemma}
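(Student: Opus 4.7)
The plan is to substitute the expression for $X(t)$ from equation \eqref{eq:link_artstein_original} into the definition \eqref{eq:def_quadratic_optimal_cost_2} of $J_R$ and to expand the resulting quadratic forms. For $t \in [h,T]$, write $N(t) \triangleq \int_{t-h}^t \Phi_A(t,s)\sigma(s)\,dW_s$ for the uncontrollable noise term appearing in Lemma \ref{lem:relation_covariance_artstein_and_original}, so that $X(t) = \Phi_A(t,t-h) Y(t-h) + N(t)$. The quadratic form $X(t)^T Q(t) X(t)$ then splits into three pieces: a pure quadratic in $Y(t-h)$, a cross term $2\, Y(t-h)^T \Phi_A(t,t-h)^T Q(t) N(t)$, and a pure noise term $N(t)^T Q(t) N(t)$.

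Each of these three pieces is handled separately under the expectation. The pure quadratic contribution reads $Y(t-h)^T \Phi_A(t,t-h)^T Q(t) \Phi_A(t,t-h) Y(t-h)$, and the change of variables $s = t-h$ in the outer time integral transforms $\int_h^T (\cdot)\,dt$ into $\int_0^{T-h} Y(s)^T \overline{Q}(s) Y(s)\,ds$, yielding exactly the first term on the right-hand side of \eqref{eq:def_quadratic_optimal_cost_artstein}. The pure noise contribution is computed via the trace identity $N(t)^T Q(t) N(t) = \text{tr}(Q(t) N(t) N(t)^T)$ together with the It\^o isometry for stochastic integrals of deterministic integrands, which delivers $\espE[N(t)^T Q(t) N(t)] = V_{min,Q}(t)$, thereby accounting for the additive constant $\int_h^T V_{min,Q}(t)\,dt$.

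The central step, and the one I expect to be the main obstacle, is showing that the cross term vanishes in expectation. Since $X$ and $U$ are $\mathcal{F}$-adapted, so is the Artstein transform $Y$; in particular $Y(t-h)$ is $\mathcal{F}_{t-h}$-measurable. Meanwhile $N(t)$, as a Wiener integral of a deterministic integrand over $[t-h,t]$, is independent of $\mathcal{F}_{t-h}$ with zero mean, by the very argument already used in the proof of Lemma \ref{lem:relation_covariance_artstein_and_original}. Conditioning on $\mathcal{F}_{t-h}$ and invoking the tower property then gives $\espE[Y(t-h)^T \Phi_A(t,t-h)^T Q(t) N(t)] = 0$.

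Finally, the identical expansion applied at $t = T$ to the terminal penalty $X(T)^T G X(T)$ splits it into $\espE[Y(T-h)^T \overline{G} Y(T-h)] + V_{min,G}(T)$, while the control-effort term is left unchanged since it only involves $U$. Collecting all contributions yields precisely \eqref{eq:def_quadratic_optimal_cost_artstein}. Symmetry and positive semi-definiteness of $\overline{Q}(t)$ and $\overline{G}$ are inherited from those of $Q(t)$ and $G$ through the congruence structure $M \mapsto \Phi^T M \Phi$.
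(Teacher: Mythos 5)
Your proposal is correct and follows essentially the same route as the paper's proof: substitute the decomposition $X(t)=\Phi_A(t,t-h)Y(t-h)+N(t)$ from Lemma \ref{lem:relation_covariance_artstein_and_original} into $J_R$, kill the cross term by the independence of $Y(t-h)$ and the Wiener integral $N(t)$ (the paper invokes ``similar computations'' to Lemma \ref{lem:relation_covariance_artstein_and_original}, where this independence is established; you merely make the tower-property step explicit), and evaluate the pure noise term via the It\^o isometry \eqref{eq:quadratic_variation_of_stochastic_integral} to obtain $V_{min,Q}(t)$ and $V_{min,G}(T)$. The only additional detail you supply beyond the paper is the explicit change of variables $s=t-h$ in the time integral and the congruence argument for the symmetry and positive semi-definiteness of $\overline{Q}$ and $\overline{G}$, both of which the paper leaves implicit.
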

\begin{proof}
 Since the processes $\Phi_A(t,t-h) Y(t-h)$ and $\int_{t-h}^t \Phi_A(t,s) \sigma(s) dW_s$ are independent, similar computations to the ones in the proof of Lemma \ref{lem:relation_covariance_artstein_and_original} yield, for $t>h$
\begin{align*}
& \espE \left[ X(t)^T Q(t) X(t) \right] = \\
& \espE \left[ Y(t-h)^T \Phi_A(t,t-h)^T Q(t) \Phi_A(t,t-h) Y(t-h) \right] \\
& + \espE \left[ \left( \int_{t-h}^{t} \sigma(s) \Phi_A(t,s) dW_s \right)^T Q(t) \cdot \right. \\
& \left. \hspace{5em} \left( \int_{t-h}^{t} \sigma(s) \Phi_A(t,s) dW_s \right) \right].
\end{align*}
Therefore, by leveraging \eqref{eq:quadratic_variation_of_stochastic_integral} we may compute
\begin{align*}
\espE \left[ X(t)^T Q(t) X(t) \right] & = \espE \left[ Y(t-h)^T \overline{Q}(t-h) Y(t-h) \right] \\
& + V_{min,Q}(t).
\end{align*}
Replacing the latter formula in \eqref{eq:def_quadratic_optimal_cost_2} 
finally yields \eqref{eq:def_quadratic_optimal_cost_artstein}.
\end{proof}
At this step, we can apply the results from \cite[Chapter 6.2]{yong_stochastic_1999} to obtain optimal controls that minimize \eqref{eq:def_quadratic_optimal_cost_2} in the form of feedback controls:
\begin{theorem}\label{thm:analytic_solution_}
The control $U^*$ solution to\eqref{eq:problem_min_cost_X} is given by 
\begin{equation*}
    U^* = - R(t)^{-1} \overline{B}(t)^T P(t) Y(t) ,
\end{equation*}
with $P(\cdot)$ solution to the deterministic Riccati equation
\begin{equation}\label{eq:Riccati_ODE_optimal}
    \left\{
    \begin{array}{ll}
    \dot{P} & = -A^T P - P A - \overline{Q} + P \overline{B}  R^{-1} \overline{B} ^T P\\
    P(0) &= \overline{G}.
    \end{array}
    \right.
\end{equation}
\end{theorem}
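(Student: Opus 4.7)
The plan is to reduce the delayed stochastic LQ problem to a classical non-delayed stochastic LQ problem via the Artstein transform, and then invoke standard LQ theory.

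First, I would apply Lemma \ref{lem:rewrite_cost_artstein_nondelayed} to rewrite the cost $J_R(U,X)$ in terms of the Artstein transform $Y$ as in \eqref{eq:def_quadratic_optimal_cost_artstein}. The key observation is that the two additional terms $\int_h^T V_{\min,Q}(t)\,dt + V_{\min,G}(T)$ depend only on the deterministic data $A(\cdot)$, $\sigma(\cdot)$, $Q(\cdot)$, $G$ and are independent of $U$. Consequently, minimizing $J_R(U,X)$ over $U \in \mathcal{U}$ is equivalent to minimizing the auxiliary cost
\begin{equation*}
\widetilde{J}(U,Y) \triangleq \espE\!\left[\int_0^{T-h}\!\!\bigl(Y(t)^T \overline{Q}(t) Y(t) + U(t)^T R(t) U(t)\bigr) dt + Y(T-h)^T \overline{G}\, Y(T-h)\right]
\end{equation*}
subject to the non-delayed linear SDE \eqref{eq:artstein_system_linear} satisfied by $Y$ on the interval $[0,T-h]$.

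Next, I would observe that this auxiliary problem falls exactly within the framework of stochastic LQ control with deterministic coefficients, additive noise, and positive (semi)definite weights treated in \cite[Chapter 6.2]{yong_stochastic_1999}. The coefficients $A$, $\overline{B}$, $\sigma$ are in $L^\infty$, the matrices $\overline{Q}, \overline{G}$ are symmetric positive semidefinite (as congruences of $Q(t+h), G$), and $R \in L^\infty([0,T], \mathcal{S}_m^{++})$, which is exactly the hypothesis needed to ensure the corresponding Riccati equation admits a unique solution on the control horizon. Applying that theorem then yields directly that the unique optimal control is the linear feedback $U^*(t) = -R(t)^{-1}\overline{B}(t)^T P(t) Y(t)$, with $P$ satisfying the Riccati ODE \eqref{eq:Riccati_ODE_optimal} (the initial condition $P(0) = \overline{G}$ being understood, after the usual time-reversal, as the terminal cost $\overline{G}$ at the effective terminal time $T-h$).

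The only delicate point I anticipate is bookkeeping the time-interval mismatch: the running cost on $X$ in \eqref{eq:def_quadratic_optimal_cost_2} is integrated on $[h,T]$, while the running cost on $Y$ and $U$ must be integrated on $[0,T-h]$. This is precisely why one uses the change of variable $t \mapsto t-h$ inside the computation underlying Lemma \ref{lem:rewrite_cost_artstein_nondelayed}, together with the decomposition \eqref{eq:link_artstein_original} of $X(t)$ into the controllable component $\Phi_A(t,t-h)Y(t-h)$ and the uncontrollable stochastic integral. Once this is handled, the additive noise $\sigma(t)dW_t$ in the $Y$-dynamics contributes only a further constant to $\widetilde{J}$, which again does not influence the minimizer. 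Hence the Riccati-based feedback inherited from the Artstein system is itself the unique optimum for the original delayed problem \eqref{eq:problem_min_cost_X}.
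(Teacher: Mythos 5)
Your proposal is correct and follows essentially the same route as the paper's own proof: rewrite the cost via Lemma \ref{lem:rewrite_cost_artstein_nondelayed}, discard the control-independent terms $\int_h^T V_{min,Q}(t)\,dt + V_{min,G}(T)$, and apply the standard stochastic LQ result of \cite[Chapter 6.2]{yong_stochastic_1999} to the non-delayed Artstein system. Your additional remarks on the time-interval shift and the interpretation of the Riccati initial condition are consistent with (and slightly more explicit than) the paper's argument.
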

\begin{proof}
Since $V_{min,Q}(t)$ is independent from the control variable, Lemma \ref{lem:rewrite_cost_artstein_nondelayed} shows that the minimization problem \eqref{eq:problem_min_cost_X} is equivalent to the following:
\begin{equation}\label{eq:problem_min_cost_Y}
    \min_{u \in \mathcal{U}} \overline{J}_R(U,Y) , \quad \text{$X$ solves SDE \eqref{eq:artstein_system_linear}.}
\end{equation}
where $\overline{J}_R$ is defined as
\begin{equation}\label{eq:def_quadratic_optimal_cost}
\begin{split}
    \overline{J}_R(U,Y) \triangleq & \espE \left[ \int_0^{T-h} Y(t)^T \overline{Q}(t) Y(t) dt  \right] \\
    & + \espE \left[ Y(T-h)^T \overline{G} Y(T-h)\right] \\
    & + \espE \left[ \int_0^{T-h} U(t)^T R(t) U(t) dt \right] . 
\end{split}
\end{equation}
This problem is LQ, and thus we may apply the results in \cite[Chapter 6.2]{yong_stochastic_1999} to conclude.
\end{proof}

Thanks to Lemma \ref{lem:rewrite_cost_artstein_nondelayed}, we may note that \eqref{eq:def_quadratic_optimal_cost_2} is lower-bounded by the following quantity:
\begin{equation} \label{eq:minVarTrack}
    V_{min} \triangleq \int_h^T V_{min,Q}(t) dt + V_{min,G}(T),
\end{equation}
We will prove that, under some additional assumptions, 
when $R$ tends to zero it holds that
$$
\min_{U \in \mathcal{U}} J_R(U,X) \underset{R \rightarrow 0}{\rightarrow} V_{min} ,
$$
meeting our desired goal.

\subsection{Convergence equivalence}


From now on, we select the specific control weight $R(t) = \rho I$, where $\rho > 0$ is a user-defined parameter that characterizes the control effort in \eqref{eq:def_quadratic_optimal_cost_2}. We accordingly denote the associated cost functional $J_R$ by $J_\rho$, which corresponds to
\begin{equation}\label{eq:def_quadratic_optimal_cost_3}
\begin{split}
    J_\rho(U,X) \triangleq & \espE \left[ \int_h^T X(t)^T Q(t) X(t) dt + X(T)^T G X(T) \right] \\
    & + \rho \ \espE \left[\int_0^{T-h} U(t)^T U(t) dt \right] .
\end{split}
\end{equation}
We denote by $U_\rho$ the corresponding optimal control, whose existence is granted by Theorem \ref{thm:analytic_solution_}. Finally, we denote $X_\rho$ the corresponding state trajectory.
Our objective is to give some conditions on the dynamics under which the weighted variance of $X_\rho$ approaches the threshold quantity \eqref{eq:minVarTrack} as $\rho$ approaches zero.
The following lemma plays a crucial role in achieving this goal:

\begin{lemma}\label{lem:equivalence_convergence_controls}
Let $U_n$ be a sequence of controls in $\mathcal{U}$ such that, for any matrix $S$ in $\mathcal{S}_n^+$, the associated state $Y_n$ obtained through equation \eqref{eq:artstein_system_linear} verifies
\begin{equation}\label{eq:convergence_Xn_assumption}
\forall t \in [0,T-h], \quad     \espE \left[ Y_n(t)^T S Y_n(t) \right] \underset{n \rightarrow \infty}{ \rightarrow} 0.
\end{equation}
Then,
\begin{equation}\label{eq:convergence_optimal_control}
J_\rho( U_\rho ) \underset{\rho \rightarrow 0}{ \rightarrow} \int_h^T V_{min,Q}(t) dt + V_{min,G}(T) .
\end{equation}
\end{lemma}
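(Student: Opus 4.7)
The plan is to combine Lemma \ref{lem:rewrite_cost_artstein_nondelayed} with the optimality of $U_\rho$ and the given hypothesis on $U_n$ to squeeze $J_\rho(U_\rho)$ against $V_{min}$. By Lemma \ref{lem:rewrite_cost_artstein_nondelayed}, for any $U \in \mathcal{U}$ with associated $X$ and Artstein transform $Y$, one has
\begin{equation*}
J_\rho(U,X) = \overline{J}_\rho(U,Y) + V_{min},
\end{equation*}
where $\overline{J}_\rho$ groups the (nonnegative) $Y$-quadratic terms and the $\rho$-weighted control effort appearing in \eqref{eq:def_quadratic_optimal_cost_3}. The lower bound $J_\rho(U_\rho) \geq V_{min}$ is then immediate, so it suffices to show $\limsup_{\rho \to 0} J_\rho(U_\rho) \leq V_{min}$.

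For the matching upper bound I would argue by a two-step diagonal comparison against the sequence $U_n$. Fix $\epsilon > 0$. Applying the hypothesis \eqref{eq:convergence_Xn_assumption} with $S = \overline{G}$ at $t = T-h$ (and with a suitable handling of the integral term, discussed below), I would choose $n_\epsilon$ large enough that
\begin{equation*}
\espE[Y_{n_\epsilon}(T-h)^T \overline{G} Y_{n_\epsilon}(T-h)] + \int_0^{T-h} \espE[Y_{n_\epsilon}(t)^T \overline{Q}(t) Y_{n_\epsilon}(t)] dt < \tfrac{\epsilon}{2}.
\end{equation*}
Since $U_{n_\epsilon}$ is now fixed and lies in $\mathcal{U}$, the control-effort term $\rho \, \espE\!\left[\int_0^{T-h} |U_{n_\epsilon}(t)|^2 dt \right]$ tends to zero as $\rho \to 0$, so I would pick $\rho_\epsilon > 0$ small enough that this term is below $\epsilon/2$ for every $\rho < \rho_\epsilon$. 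Optimality of $U_\rho$ for the cost $J_\rho$ then yields
\begin{equation*}
J_\rho(U_\rho) \leq J_\rho(U_{n_\epsilon}) = \overline{J}_\rho(U_{n_\epsilon}, Y_{n_\epsilon}) + V_{min} < V_{min} + \epsilon,
\end{equation*}
and letting $\epsilon \to 0$ closes the argument.

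The main obstacle I foresee is the integral convergence in the bracketed step. The hypothesis \eqref{eq:convergence_Xn_assumption} only asserts pointwise-in-$t$ convergence of $\espE[Y_n(t)^T S Y_n(t)]$ to zero. The integrand $\espE[Y_n(t)^T \overline{Q}(t) Y_n(t)]$ is dominated pointwise by $\|\overline{Q}\|_{L^\infty}\, \espE|Y_n(t)|^2$, which does tend to zero for each $t$ by taking $S = I$, so dominated convergence will suffice provided a uniform-in-$n$ majorant in $L^1([0,T-h])$ is available. Such a majorant should follow from a standard Gr\"onwall-type estimate on \eqref{eq:artstein_system_linear} as soon as $\|U_n\|_{L_\mathcal{F}^2}$ stays bounded in $n$; otherwise the hypothesis should be implicitly read in the stronger integrated sense $\int_0^{T-h} \espE|Y_n(t)|^2 dt \to 0$, which is what the authors most likely intend to invoke.
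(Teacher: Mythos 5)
Your proof is correct and follows essentially the same squeeze argument as the paper: use Lemma \ref{lem:rewrite_cost_artstein_nondelayed} to get the lower bound $J_\rho(U_\rho)\ge V_{min}$, then compare $J_\rho(U_\rho)\le J_\rho(U_{n})$ via optimality and kill the residual $Y$-quadratic and control-effort terms; the only organizational difference is that the paper pairs a diagonal sequence $\rho_n$ with $U_n$ and then invokes monotonicity of $\rho\mapsto J_\rho(U_\rho)$ to upgrade to the full limit, whereas your $\epsilon$--$\rho_\epsilon$ argument gets the full limit directly. Your closing remark about the integral term is a fair observation --- the paper's proof also silently passes from the pointwise hypothesis \eqref{eq:convergence_Xn_assumption} to convergence of $\int_0^{T-h}\espE[Y_n(t)^T\overline{Q}(t)Y_n(t)]\,dt$, which indeed needs a dominated-convergence majorant or the integrated reading (and is supplied, uniformly in $t$, by the explicit bounds in the sufficient-condition lemmas that follow).
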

\begin{proof}
    Let us first note that the function $\rho \mapsto J_\rho( U_\rho )$ is an increasing function of $\rho$. Indeed, if $0 < \rho_1 < \rho_2$ are two positive real numbers, by optimality it holds that
    $$
    J_{\rho_2}( U_{\rho_2}) \geq J_{\rho_1}( U_{\rho_2}) \geq J_{\rho_1}( U_{\rho_1}) .
    $$
    Let us select a sequence $(\rho_n)_{n \in \mathbb{N}} \subseteq (0,\infty)$ such that
    $$
    \left\{
        \begin{array}{ll}
        & \rho_n \espE \left[ \int_0^{T-h} U_n(t)^T U_n(t) \; dt \right] \underset{n \rightarrow \infty}{ \rightarrow} 0\\
        & \rho_n \underset{n \rightarrow \infty}{ \rightarrow} 0.
        \end{array}
    \right.
    $$
    Due to  \eqref{eq:def_quadratic_optimal_cost_artstein}, we have that 
    \begin{align*}
    J_{\rho_n}(U_n) & = \espE \left[ \int_0^{T-h} Y_n(t)^T \overline{Q}(t) Y_n(t) dt \right] \\
    & + \espE \left[ Y_n(T-h)^T \overline{G} Y_n(T-h) \right] \\
    & + \rho_n \espE \left[ \int_0^{T-h} U_n(t)^T U_n(t) dt \right] \\
    & + \int_h^T V_{min,Q}(t) dt + V_{min,G}(T).
    \end{align*}
    By definition of $\rho_n$ and using the assumptions made on $Y_n$, we obtain
    $$
    J_{\rho_n}(U_n) \underset{n \rightarrow \infty}{ \rightarrow} \int_h^T V_{min,Q}(t) dt + V_{min,G}(T).
    $$
    The optimality of $U_{\rho_n}$ yields
    $$
    \int_h^T V_{min,Q}(t) dt + V_{min,G}(T) \leq J_{\rho_n}(U_{\rho_n}) \leq J_{\rho_n}(U_n),
    $$
    which implies that 
    $$
    J_{\rho_n}(U_{\rho_n}) \underset{n \rightarrow \infty}{ \rightarrow} \int_h^T V_{min,Q}(t) dt + V_{min,G}(T).
    $$
    To conclude the proof, note that since $\rho \mapsto J_\rho( U_\rho )$ is an increasing function of $\rho$, then the above limit is unique for any sequence $(\rho_n)_{n \in \mathbb{N}} \subseteq (0,\infty)$ that converges to zero. 
\end{proof}

Establishing the convergence $\eqref{eq:convergence_optimal_control}$ consists of proving the existence of a sequence of controls that drives the state variance toward the minimum threshold. However, achieving this existence is not straightforward, as larger controllers may force the integral of the weighted variance to divergence. Below, we provide examples of SDEs for which finding such a sequence of controls is feasible.

\subsection{Sufficient conditions for convergence equivalence}


\subsubsection{Pole placement with a normal matrix}

Let us consider the following assumption:
\begin{assumption}\label{asm:assumptions_normal_autonomous_system} The followings hold true:

\begin{enumerate}
    \item The matrices $A$ and $B$ are time independent.
    \item There exists a sequence of matrices $K_n \in \mathbb{R}^{m \times n}$ such that every $H_n \triangleq A - \overline{B} K_n$ is normal, and
    \begin{equation}\label{eq:assumption_normal_max_eigenvalue}
        \max( \Re(Sp( H_n ) )) \leq - n .
    \end{equation}
\end{enumerate}
\end{assumption}


\begin{lemma}\label{lem:estimate_for_normal_matrix}
    Let $S \in \mathcal{S}_n^+$. Under Assumption \ref{asm:assumptions_normal_autonomous_system}, the sequence of controls $U_n(t) = K_n Y_n(t)$ satisfies 
    \begin{equation}\label{eq:estimate_control_autonomous_system_normal}
    \espE \left[ Y_n(t)^T S Y_n(t) \right] \leq  \frac{\Vert S \Vert \Vert \sigma \Vert_\infty^2}{2 n}.
    \end{equation}
\end{lemma}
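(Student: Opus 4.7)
My approach is to solve the closed-loop SDE in closed form and then exploit normality of the closed-loop matrix to sharply control its exponential.

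First, substituting $U_n = K_n Y_n$ into the Artstein dynamics \eqref{eq:artstein_system_linear} closes the loop into the autonomous linear SDE
\begin{equation*}
dY_n(t) = H_n Y_n(t) dt + \sigma(t) dW_t, \quad Y_n(0) = 0,
\end{equation*}
whose unique strong solution is the centered Gaussian Wiener integral $Y_n(t) = \int_0^t e^{H_n(t-s)} \sigma(s) dW_s$.

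Second, applying the quadratic Itô isometry---already invoked in the proof of Lemma~\ref{lem:relation_covariance_artstein_and_original} via identity \eqref{eq:quadratic_variation_of_stochastic_integral}---to this Wiener integral, I rewrite the target quadratic form as
\begin{equation*}
\espE[Y_n(t)^T S Y_n(t)] = \int_0^t \sigma(s)^T e^{H_n^T(t-s)} S e^{H_n(t-s)} \sigma(s) ds.
\end{equation*}

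Third, and this is the decisive step where Assumption~\ref{asm:assumptions_normal_autonomous_system} enters, I exploit the normality of $H_n$ to unitarily diagonalize it and obtain the sharp spectral-norm estimate
\begin{equation*}
\Vert e^{H_n \tau} \Vert_2 = \max_{\lambda \in \mathrm{Sp}(H_n)} e^{\Re(\lambda) \tau} \leq e^{-n\tau}, \quad \tau \geq 0,
\end{equation*}
where the inequality invokes the eigenvalue bound \eqref{eq:assumption_normal_max_eigenvalue}. Bounding the integrand pointwise by $\Vert S \Vert \Vert \sigma \Vert_\infty^2 e^{-2n(t-s)}$ and performing the elementary time integration $\int_0^t e^{-2n(t-s)} ds \leq 1/(2n)$ then delivers the claimed bound \eqref{eq:estimate_control_autonomous_system_normal}.

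The main subtlety of the proof is the matrix-exponential estimate: for a general matrix $H$ whose spectrum lies in $\{\Re z \leq -n\}$, the operator norm $\Vert e^{H\tau} \Vert_2$ may exhibit arbitrarily large non-normal transient growth that would wreck the estimate. Normality of $H_n$ is precisely the structural hypothesis that aligns the spectral abscissa with the logarithmic norm, and this is the sole reason Assumption~\ref{asm:assumptions_normal_autonomous_system} demands it; the rest of the argument is routine stochastic calculus.
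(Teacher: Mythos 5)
Your proposal is correct and follows essentially the same route as the paper's proof: write the closed-loop solution as the Wiener integral $\int_0^t e^{H_n(t-s)}\sigma(s)\,dW_s$, apply the It\^o isometry \eqref{eq:quadratic_variation_of_stochastic_integral}, use normality of $H_n$ to bound $\Vert e^{H_n\tau}\Vert$ by the spectral abscissa via \eqref{eq:assumption_normal_max_eigenvalue}, and integrate the resulting exponential. Your closing remark correctly identifies why normality (rather than mere Hurwitz-ness with margin $n$) is the operative hypothesis, and your integration limits ($0$ to $t$) are in fact more internally consistent than the paper's own display, which switches to $t-h$ without affecting the final bound.
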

\begin{proof}
The solution $Y_n$ to \eqref{eq:artstein_system_linear} with feedback control $U_n$ can be explicitly computed as
\begin{align*}
Y_n(t) = \int_0^t e^{H_n(t-s)} \sigma(s) dW_s .
\end{align*}
By applying Ito's quadratic variation formula to $Y_n$, we may therefore compute
\begin{align*}
& \espE \left[ Y_n(t)^T S Y_n(t) \right]  =\\
& \int_0^{t-h} \sigma(s)^T e^{H^T_n(t-h-s)}S e^{H_n(t-h-s)} \sigma(s) ds \\
& = \int_0^{t-h} \Vert \sqrt{S} e^{H_n(t-h-s)} \sigma(s) \Vert^2 ds \\
& \leq \Vert S \Vert\int_0^{t-h} \Vert e^{H_n(t-h-s)} \sigma(s) \Vert^2 ds.
\end{align*}
Since $H_n$ is normal and $(t-h-s) > 0$, it holds that
$$\Vert e^{H_n(t-h-s)} \sigma(s) \Vert^2 \leq \max(Sp(e^{H_n(t-h-s)}))^2 \Vert \sigma(s) \Vert^2 ,$$
from which we finally infer that
\begin{align*}
& \espE \left[ Y_n(t)^T S Y_n(t) \right] \le \\
& \leq \Vert S \Vert \Vert \sigma \Vert_\infty^2 \int_0^{t-h} e^{-2n(t-h-s)}   ds \leq \frac{\Vert S \Vert \Vert \sigma \Vert_\infty^2}{2 n}.
\end{align*}
\end{proof}

Characterizing systems that satisfy Assumption 11 can be difficult. Yet, \eqref{eq:estimate_control_autonomous_system_normal} is in particular satisfied by the fairly large class of fully actuated systems, see the next section. Note that assuming fully actuation is not particularly restricting, in that, if this assumption is not satisfied, one can prove the existence of random states that can not be reached by any open-loop control, see, e.g., \cite{wang_exact_2017}.

\vspace{5px}

\subsubsection{Fully actuated systems}

\begin{assumption}\label{asm:assumptions_overactuated_system} $B(t)$, $t \in [0,T-h]$, has rank $n$.
\end{assumption}


\begin{lemma}
     Let $S \in \mathcal{S}_n^+$. Under Assumption  \ref{asm:assumptions_overactuated_system}, there exists a matrix $K(t) \in \mathbb{R}^{m \times n}$ such that 
    $$
    \overline{B}(t) K(t) = I.
    $$
    In particular, the control sequence defined by
    $$
    U_n(t) = - K(t) (A(t) + n I) Y_n(t)
    $$
    yields the estimate \eqref{eq:estimate_control_autonomous_system_normal}.
\end{lemma}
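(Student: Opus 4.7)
The plan is to directly construct $K(t)$ using the fact that $B(t+h)$, and hence $\overline{B}(t) = \Phi_A(t,t+h)B(t+h)$, has full row rank $n$ for every $t \in [0, T-h]$. First I would define the Moore--Penrose right inverse
\begin{equation*}
K(t) \triangleq \overline{B}(t)^T \bigl(\overline{B}(t)\overline{B}(t)^T\bigr)^{-1},
\end{equation*}
which is well defined pointwise thanks to Assumption 12 and the invertibility of $\Phi_A(t,t+h)$. By construction $\overline{B}(t)K(t) = I$, which settles the first claim. A short argument is needed to check that $K(\cdot) \in L^\infty$ so that the closed-loop control $U_n$ belongs to $\mathcal{U}$; this follows from the continuous dependence of matrix inversion on its argument together with the $L^\infty$ boundedness of $\overline{B}$, provided the smallest singular value of $\overline{B}$ is bounded away from zero on $[0,T-h]$ (a uniform version of Assumption 12).

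Next I would substitute $U_n(t) = -K(t)(A(t)+nI)Y_n(t)$ into the Artstein dynamics \eqref{eq:artstein_system_linear}. The drift collapses:
\begin{equation*}
A(t)Y_n(t) + \overline{B}(t) U_n(t) = A(t)Y_n(t) - (A(t)+nI)Y_n(t) = -n Y_n(t),
\end{equation*}
so that the closed-loop SDE is simply
\begin{equation*}
dY_n(t) = -n\, Y_n(t)\, dt + \sigma(t)\, dW_t, \qquad Y_n(0) = 0.
\end{equation*}
Crucially, the closed-loop generator $-nI$ is time-invariant and normal, even though $A$ and $\overline{B}$ are time-varying. This places us in a trivialized version of the setting of Lemma \ref{lem:estimate_for_normal_matrix}.

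Finally, from the scalar exponential fundamental matrix I would write the explicit solution $Y_n(t) = \int_0^t e^{-n(t-s)} \sigma(s)\, dW_s$ and apply the same Itô isometry argument as in Lemma \ref{lem:estimate_for_normal_matrix}: for any $S \in \mathcal{S}_n^+$,
\begin{equation*}
\espE\bigl[Y_n(t)^T S Y_n(t)\bigr] = \int_0^t e^{-2n(t-s)} \sigma(s)^T S \sigma(s)\, ds \leq \Vert S \Vert \Vert \sigma \Vert_\infty^2 \int_0^t e^{-2n(t-s)}\, ds \leq \frac{\Vert S \Vert \Vert \sigma \Vert_\infty^2}{2n},
\end{equation*}
which is exactly estimate \eqref{eq:estimate_control_autonomous_system_normal}. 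The main obstacle is the measurability/boundedness of $K(t)$: the pointwise full-rank hypothesis of Assumption 12 must be promoted to a uniform one (uniform lower bound on the smallest singular value of $\overline{B}$) to ensure $U_n \in \mathcal{U}$; aside from that, the rest of the argument is a direct specialization of Lemma \ref{lem:estimate_for_normal_matrix} to the particularly clean choice $H_n = -nI$.
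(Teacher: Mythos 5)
Your proposal is correct and follows essentially the same route as the paper: construct a right inverse of $\overline{B}(t)$ (the paper simply asserts a ``stable right inverse'' exists where you write out the Moore--Penrose formula), observe that the closed loop collapses to $dY_n = -nY_n\,dt + \sigma(t)\,dW_t$, and reuse the Itô-isometry estimate from Lemma \ref{lem:estimate_for_normal_matrix} with $H_n = -nI$. Your remark that the pointwise rank condition should be promoted to a uniform bound on the smallest singular value of $\overline{B}$ to guarantee $K \in L^\infty$ and $U_n \in \mathcal{U}$ is a fair point that the paper's one-line justification leaves implicit.
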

\begin{proof}
    Thanks to Assumptions \ref{asm:assumptions_on_the_dynamic} and \ref{asm:assumptions_overactuated_system}, $\overline{B}(t)$ has rank $n$ for all $t \in [0,T-h]$, and therefore it admits a stable right inverse. Each process $Y_n$, stemming from each feedback control $U_n$ as defined above, satisfies
    \begin{align*}
    dY_n(t) & = ( A(t) Y_n(t) + \overline{B}(t) U_n(t) )dt + \sigma(t) dW_t \\
    & = - n Y_n(t) +  \sigma(t) dW_t ,
    \end{align*}
    and we can therefore follow the same arguments as in the proof of Lemma \ref{lem:estimate_for_normal_matrix} to obtain the estimate \eqref{eq:estimate_control_autonomous_system_normal}.
\end{proof}

Summing up, thanks to Theorem \ref{thm:analytic_solution_}, we come up with a closed-loop-control-based method to steer the delayed stochastic system, while keeping the state variance close to the threshold variance at will. We demonstrate the efficiency of this method via numerical simulations next. 

\section{Numerical results}\label{NR}

To validate our theoretical findings, we implemented our optimal control-based method to regulate the temperature of a building in realistic settings, where the heat source deliver thermal energy up to some delay. 
We leveraged and enhanced the dynamical models outlined in \cite{van_der_zwan_operational_2020}. In particular, we made these models more realistic by adding noise. This originates from various sources: 1) the stochastic nature of external temperature fluctuations (modeled using an Ornstein-Uhlenbeck process, see, e.g., \cite{prabakaran_temperature_2020}), and 2) the unpredictable usage of the building (modeled via the coefficient $\sigma_i$ below). Then, the dynamics are given by
\begin{align*}
\begin{array}{ll}  dT(t) = \left[ \left( \begin{array}{cc} - R_e & R_e \\ 0 & - \theta \end{array} \right) T(t) + \left( \begin{array}{c} R_u \\ 0 \end{array} \right) U(t-h) \right] dt \\
+ \left( \begin{array}{c} - R_e T_{eq} \\ \theta T_p(t) + \dot{T}_p(t) \end{array} \right)  dt  + \left( \begin{array}{c} \sigma_i \\ \sigma_e \end{array} \right) dW_t , \end{array}
\end{align*}
where $R_e = 5 \ 10^{-4}, R_u = 2 \ 10^{-4}, \theta = 3.5 \ 10^{-4}, T_{eq} = 20, \sigma_e = 0.1, \sigma_i = 0.05$, and with baseline (i.e., ``predicted'') external temperature $T_p(t) = 5+5 \cos(0.004 t)$. 
These dynamic are two dimensional. The first variable models the evolution of the temperature of the building (to control), while the second one models the evolution of the external temperature (that we can not control), which has mean $T_p(t)$ and quadratic variation $\int_0^t e^{- 2 \theta(t-s)} \sigma(s)^2 ds$. 
The simulation horizon is five days. This enables to stress test our control strategy in mitigating the fluctuations of the external temperature, that are induced by the day-night cycle. 
The threshold variance of the temperature of the building amounts to $V_{min} = 1.76$.

Figure~\ref{fig:simu_bat_temp_double_bruit} shows the trajectory of the system using our optimal control-based feedback. 
The temperature is efficiently stabilized even under poor knowledge of the external temperature. Figure \ref{fig:simu_bat_var_double_bruit} shows the evolution of the variance of the temperature of the building. Our method enables to successfully track the threshold variance under controls with high gains, as granted by Theorem \ref{thm:controllability_stochastic_delayed}.

\begin{figure}[ht!]
\centering
  \includegraphics[width=1.0\linewidth]{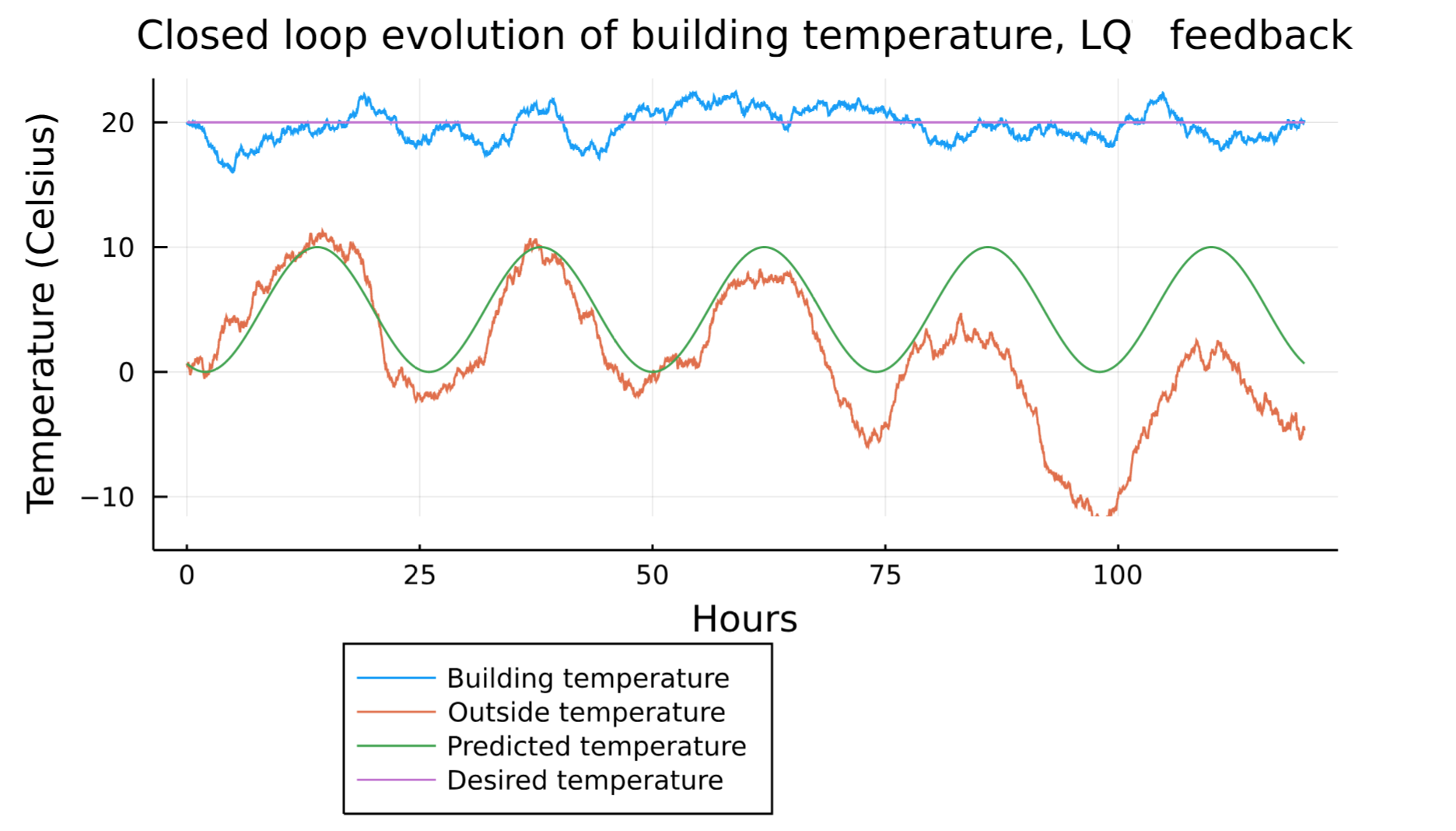}
  \caption{\centering A trajectory with an optimal control-based feedback with high gain.}
  \label{fig:simu_bat_temp_double_bruit}
\end{figure}

\begin{figure}[ht!]
  \centering
  \includegraphics[width=1.0\linewidth]{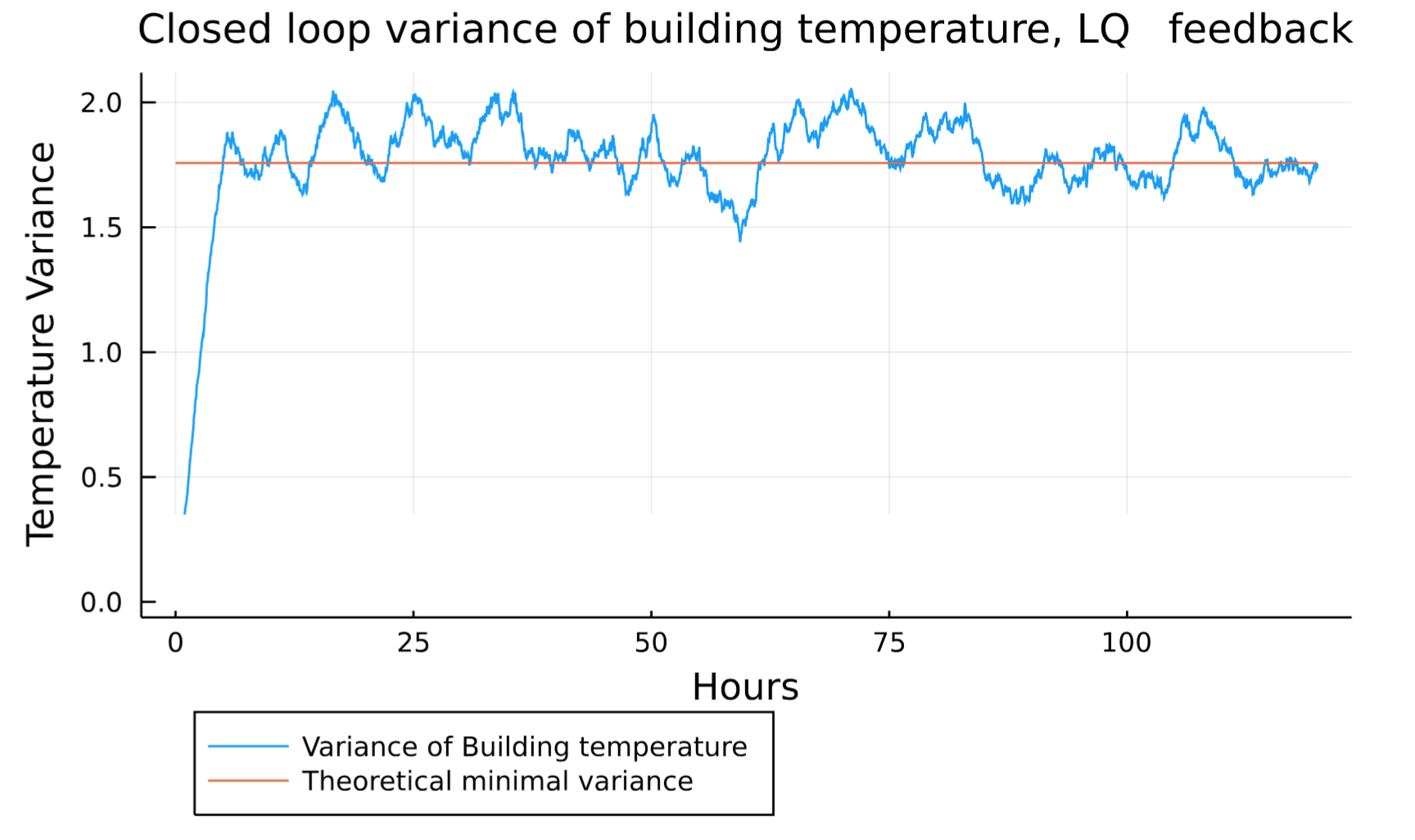}
  \caption{\centering State variance through time compared to the theoretical lower bound.}
  \label{fig:simu_bat_var_double_bruit}
\end{figure}

\section{Conclusion}\label{CL}

In this paper, we solved the (mean-covariance) steering problem for a general class of linear SDEs subject to an input delay. Our objective consisted of steering in finite time the state of the system to a final state with some given final probability distribution. In particular, we proposed a method to minimize the final state covariance. 
We did so by leveraging the Artstein transformation, thanks to which we derived a "predictor" of the state that tracks a non-delayed SDE. We then established a linear relationship between the predictor covariance and the original state covariance. In particular, this relationship revealed the existence of an structural minimal covariance below which the system can not be steered, and which is essentially due to the presence of the delay. Nevertheless, we proved the system can be steered to any covariance that is greater than this minimal covariance. 
Finally, we introduced an optimal control-based approach to minimize the system covariance throughout the whole control horizon, computing upper bounds for this minimal variance. We assessed the efficiency of this control strategy via numerical simulations on realistic stochastic systems affected by delays. 

Several exciting research directions 
are listed hereafter. Extending our approach to linear SDEs with multiplicative noise would enable the modeling of more sophisticated, though relevant systems. However, due to the possible lack of the Artstein transformation, it could be challenging to derive an explicit expression for the variance threshold as we did in the present work. Additionally, it would be interesting to explore systems with multiple input delays. One could build upon previous studies on optimal control of ODEs with multiple input delays \cite{basin_optimal_2006}, and then try to extend suchn results to more general SDEs. 

\bibliographystyle{ieeetr}
\bibliography{references}

\end{document}